\newtheorem{theorem}{Theorem}[section]
\newtheorem{proposition}[theorem]{Proposition}
\newtheorem{corollary}[theorem]{Corollary}
\theoremstyle{definition}
\theoremstyle{remark}
\numberwithin{equation}{section}
\newcommand{\al}{\alpha}
\newcommand{\de}{\delta}
\newcommand{\vphi}{\varphi}
\newcommand{\la}{\lambda}
\newcommand{\om}{\omega}
\newcommand{\si}{\sigma}
\newcommand{\De}{\Delta}
\newcommand{\Si}{\Sigma}
\def\hg{\widehat g}
\newcommand{\tpsi}{\widetilde{\psi}}
\newcommand{\tphi}{\widetilde{\phi}}
\newcommand{\tvphi}{\widetilde{\vphi}}
\def\CC{\mathbb{C}}
\def\RR{\mathbb{R}}
\def\BB{\mathbb{B}}
\def\ZZ{\mathbb{Z}}
\def\TT{\mathbb{T}}
\def\MM{{\mathbb{M}^n}}
\renewcommand\SS{\mathbb{S}}
\newcommand\minus\backslash
\newcommand\lan\langle
\newcommand\ran\rangle
\newcommand{\e}{{e}}
\DeclareMathOperator\dist{dist}
\renewcommand\leq\leqslant
\renewcommand\geq\geqslant
\newlength{\intwidth}
\begin{document}

\title[Nodal sets of high-energy eigenfunctions]{High-energy eigenfunctions of the Laplacian on the torus and the sphere with nodal sets of complicated topology}

\author{A. Enciso}
\author{D. Peralta-Salas}
\address{Instituto de Ciencias Matem\'aticas, Consejo Superior de
  Investigaciones Cient\'\i ficas, 28049 Madrid, Spain}
\email{aenciso@icmat.es, dperalta@icmat.es}

\author{F. Torres de Lizaur}
\address{Max-Planck-Institut f\"{u}r Mathematik, Vivatsgasse 7, 53111 Bonn, Germany}
\email{ftorresdelizaur@mpim-bonn.mpg.de}

\begin{abstract}
Let $\Sigma$ be an oriented compact hypersurface in the round sphere $\SS^n$ or in the flat
torus $\TT^n$, $n\geq 3$. In the case of the torus, $\Sigma$ is further assumed to
be contained in a contractible subset of $\TT^n$. We show that for any
sufficiently large enough odd integer $N$ there exists an eigenfunctions $\psi$ of the Laplacian on
$\SS^n$ or $\TT^n$ satisfying $\Delta \psi=-\la \psi$ (with $\lambda=N(N+n-1)$ or $N^2$ on $\SS^n$ or $\TT^n$, respectively), and with a connected component of the nodal set of $\psi$ given by~$\Sigma$, up to an ambient diffeomorphism.
\end{abstract}

\maketitle

\section{Introduction}

Let $M$ be a closed manifold of dimension
$n\geq 3$ endowed with a smooth Riemannian metric $g$. The
Laplace eigenfunctions of $M$ satisfy the equation
\[
\De u_k=-\la_k u_k\,,
\]
where $0=\la_0<\la_1\leq\la_2\leq\dots$
are the eigenvalues of the Laplacian. The zero set $u_k^{-1}(0)$ is called the nodal set
of the eigenfunction.

The study of the nodal sets of the
eigenfunctions of the Laplacian in a compact Riemannian manifold
is a classical topic in geometric analysis with a number of
important open problems~\cite{Ya82,Ya93}. When the Riemannian metric is not fixed, the nodal set is quite flexible. Indeed, it has been recently shown that~\cite{EP16}, given
a separating hypersurface $\Si$ in $M$, there is a metric~$g$ on the
manifold for which the nodal set $u_1^{-1}(0)$ of the first
eigenfunction is precisely~$\Si$. This result has been extended to the class of metrics conformal to a metric $g_0$ prescribed a priori~\cite{EPS17}, and to higher codimension submanifolds arising as the joint nodal set of several eigenfunctions corresponding to a degenerate eigenvalue~\cite{EHP16}.

For a fixed Riemannian metric, the problem is much more rigid than when one can freely choose a metric
adapted to the geometry of the hypersurface that one aims to
recover from the nodal set of the eigenfunctions. In this case, the techniques developed in~\cite{EP16,EHP16,EPS17} do not work. Nevertheless, since the Hausdorff measure of the nodal sets of the eigenfunctions grows as the eigenvalue tends to infinity~\cite{Lo1,Lo2}, one expects that the nodal set may become topologically complicated for high-energy eigenfunctions.

Our goal in this paper is to establish the existence of high-energy eigenfunctions of the Laplacian on the round sphere $\SS^n$ and the flat torus $\TT^n$ with nodal sets diffeomorphic to a given submanifold. All along this paper,~$\SS^n$ denotes the unit sphere in $\RR^{n+1}$ and $\TT^n$ is the standard flat n-torus, $(\RR/2\pi\ZZ)^n$.

More precisely, our main theorem shows that for a sequence of high enough eigenvalues, there exist $m$ eigenfunctions of the Laplacian on $\SS^n$ or $\TT^n$ with a joint nodal set diffeomorphic to a given codimension $m$ submanifold $\Sigma$. For the construction we need to assume that the normal bundle of $\Sigma$ is trivial. This means that a small tubular neighborhood of the submanifold~$\Sigma$ must be
diffeomorphic to $\Sigma\times\RR^m$. In the statement, structural stability means that any small enough perturbation of the corresponding
eigenfunction (in the $C^k$ norm with $k\geq1$) still has a union of connected
components of the nodal set that is diffeomorphic to the submanifold $\Sigma$ under consideration. Throughout, diffeomorphisms are of class $C^\infty$ and submanifolds are $C^\infty$ and without boundary.

\begin{theorem}\label{T.high}
Let $\Sigma$ be a finite union of (disjoint, possibly knotted or linked) codimension $m\geq 1$ compact
submanifolds of $\SS^n$ or $\TT^n$, $n\geq 3$, with trivial normal bundle. In the case of the torus, we further assume that $\Sigma$ is contained in a contractible subset. If $m=1$, we also assume that $\Sigma$ is connected. Then for any large
enough odd integer $N$ there
are $m$ eigenfunctions $\psi_1,\dots,\psi_m$ of the Laplacian with
eigenvalue $\la=N(N+n-2)$ (in $\SS^n$) or $\lambda=N^2$ (in $\TT^n$), and a diffeomorphism $\Phi$ such that
$\Phi(\Sigma)$ is the union of connected components of the joint nodal set
$\psi_1^{-1}(0)\cap\cdots\cap\psi_m^{-1}(0)$. Furthermore, $\Phi(\Sigma)$ is structurally stable.
\end{theorem}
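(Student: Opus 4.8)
The plan is to reduce, through an \emph{inverse localization} principle for high‐energy eigenfunctions, to a question on $\RR^n$: the construction of an entire solution of the Helmholtz system whose joint nodal set has a structurally stable connected component diffeomorphic to $\Sigma$. First I would record the inverse localization property: for every solution $h$ of $\Delta h+h=0$ on $\RR^n$, every $R>0$, $k\geq1$ and $\delta>0$, and every sufficiently large odd $N$, there is an eigenfunction $\psi_N$ on $\SS^n$ (eigenvalue $N(N+n-2)$) or $\TT^n$ (eigenvalue $N^2$) with $\|\psi_N(\exp_{x_0}(\bx/N))-h(\bx)\|_{C^k(B_R)}<\delta$ in geodesic normal coordinates at a point $x_0$; one proves this by expanding $\psi_N$ in degree-$N$ spherical harmonics (or lattice modes on $\TT^n$) and using, after a stationary-phase analysis, the density on compact sets of Herglotz waves $\int_{\SS^{n-1}}e^{\iu\,\bxi\cdot\bx}\,d\mu(\bxi)$ among entire Helmholtz solutions. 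Next, since a neighborhood of $\Sigma$ is diffeomorphic to $\Sigma\times\RR^m$ and, on the torus, $\Sigma$ lies in a contractible set, an ambient diffeomorphism lets me assume $\Sigma\subset B_R\subset\RR^n$ with $\RR^n$ identified with a chart of $\SS^n$ or $\TT^n$: for $m\geq2$ one deletes a point off $\Sigma$, and for $m=1$ connectedness of $\Sigma$ lets one delete a point in the region it bounds. It then suffices to build an entire solution $(h_1,\dots,h_m)$ of $\Delta h_j+h_j=0$ on $\RR^n$ with a union of components of $h_1^{-1}(0)\cap\cdots\cap h_m^{-1}(0)$ equal to $\Sigma$ and with $dh_1,\dots,dh_m$ linearly independent along $\Sigma$; rescaling by $1/N$ and applying the inverse localization with $k=1$ then produces the $m$ eigenfunctions, and structural stability of $\Phi(\Sigma)$ is automatic since a transverse joint zero set persists under $C^1$-small perturbations.

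For the local model, I would approximate $\Sigma$ by a real-analytic submanifold (Whitney, then the isotopy extension theorem to preserve the diffeomorphism type), use the trivial normal bundle to fix a real-analytic splitting $\Sigma\times\RR^m$ of a tubular neighborhood, and set $\Sigma_j:=\{y_j=0\}$, so that $\Sigma=\Sigma_1\cap\cdots\cap\Sigma_m$ transversally. Since every real hypersurface is noncharacteristic for $\Delta$, the Cauchy--Kovalevskaya theorem provides, for each $j$, a real-analytic solution $h_j$ of $\Delta h_j+h_j=0$ near $\Sigma_j$ with $h_j|_{\Sigma_j}=0$ and $\partial_{\nu_j}h_j|_{\Sigma_j}=1$. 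On a common neighborhood of $\Sigma$ the joint zero set of the $h_j$ is then exactly $\Sigma$ and the differentials $dh_j$ are independent there.

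The globalization step is the main obstacle. One wants to approximate $(h_1,\dots,h_m)$, near $\Sigma$, by an \emph{entire} solution, and the Lax--Malgrange approximation theorem for $\Delta+1$ does this for a solution on an open set $U$ provided $\RR^n\setminus U$ has no relatively compact connected component. For $m\geq2$ there is no difficulty: take $U$ a thin tubular neighborhood of $\Sigma$, whose complement is connected because $\operatorname{codim}\Sigma\geq2$, approximate each $h_j$ in $C^1\loc$, and conclude by transversality that the joint nodal set of the limit has a component ambiently isotopic to $\Sigma$. The obstruction is precisely the case $m=1$, where a tubular neighborhood of the closed hypersurface $\Sigma$ always has exactly one bounded complementary component, the region $\Omega_0$ it bounds --- which is why connectedness is assumed. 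To overcome this I would use the remaining freedom of ambient diffeomorphism to put $\Omega_0$ in ``thin plate'' position, namely as a thin real-analytic neighborhood of an $(n-1)$-dimensional analytic core, arranged so that at every point the reach of the Cauchy--Kovalevskaya solution built near $\Sigma$ exceeds the local thickness of $\Omega_0$; the solution then extends, by analytic continuation, to $\Delta h+h=0$ on a full neighborhood $U\supset\overline{\Omega_0}$, whose complement in $\RR^n$ has no bounded component, so Lax--Malgrange applies. Balancing the Cauchy--Kovalevskaya reach against the thinness of $\Omega_0$ --- both being governed by the same local analytic geometry of $\Sigma$ --- is the delicate quantitative point; once the entire solution realizing $\Sigma$ is in hand, the inverse localization of the first paragraph completes the proof.
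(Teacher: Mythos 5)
Your overall architecture coincides with the paper's: inverse localization at scale $1/N$ reduces the theorem to realizing $\Sigma$ as a structurally stable union of components of the joint zero set of an entire $\RR^m$-valued solution of the Helmholtz equation, and $C^1$-stability of a transverse zero set (Thom isotopy) transfers it to the eigenfunctions. The paper simply quotes this Euclidean realization from \cite{EP} (Theorem 1.3 for $m\geq2$, Remark A.2 for $m=1$), so the substantive new content is the inverse localization itself; your re-derivation of the Euclidean step for $m\geq2$ (analytic approximation of $\Sigma$, Cauchy--Kovalevskaya, then a Runge-type theorem on the complement-connected tubular neighborhood) is essentially the argument of \cite{EP} and is sound.

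Two genuine gaps remain. First, your treatment of $m=1$ is not a proof. The ``thin plate'' device requires the Cauchy--Kovalevskaya domain of existence to exceed the thickness of $\Omega_0$ after an ambient diffeomorphism flattening $\Omega_0$; but that diffeomorphism distorts $\Sigma$ and degrades the Cauchy--Kovalevskaya radius at the same rate, and you give no mechanism for winning this competition (you yourself flag it as ``the delicate quantitative point'' and leave it open). There are also unaddressed consistency questions where the one-sided continuations from opposite parts of $\Sigma$ meet inside $\Omega_0$. The standard resolution, and the one the paper relies on via \cite{EP}, is different and clean: after replacing $\Sigma$ by a nearby analytic hypersurface, rescale the enclosed domain $\Omega_0$ so that its first Dirichlet eigenvalue is $1$; the first eigenfunction then solves $\Delta h+h=0$ in $\Omega_0$, vanishes exactly on $\Sigma$ with nonzero normal derivative by Hopf's lemma, extends analytically across the analytic boundary, and, since $\Sigma$ is connected, $\RR^n\setminus\overline{\Omega_0}$ is connected, so the global approximation theorem applies to the compact set $\overline{\Omega_0}$ directly. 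Second, your sketch of inverse localization on $\TT^n$ omits its essential arithmetic ingredient: to turn the discretized Herglotz wave $\sum_k c_k e^{\iu N\xi_k\cdot x}$ into a genuine $2\pi$-periodic eigenfunction with eigenvalue $N^2$ you must choose the directions $\xi_k\in\SS^{n-1}$ with $N\xi_k\in\ZZ^n$ while keeping them well distributed in every small cap of $\SS^{n-1}$. This is exactly the equidistribution of lattice points on spheres of radius $N$ (Duke's theorem, \cite{Du03}), which holds for $n\geq3$ and $N$ odd and is the sole source of the hypotheses ``$N$ odd'' and ``$n\geq3$'' in the statement. Without invoking it, the torus half of the argument does not close.
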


An important observation is that the proof of this theorem yields a
reasonably complete understanding of the behavior of the
diffeomorphism~$\Phi$, which is, in particular, connected with the identity. Oversimplifying a little, the
effect of $\Phi$ is to uniformly rescale a contractible subset of the
manifold that contains~$\Sigma$ to have a diameter of order
$1/N$. In particular, the control that we have over the
diffeomorphism~$\Phi$ allows us to prove an analog of this result for
quotients of the sphere by finite groups of isometries (lens spaces). Notice that $\Phi(\Sigma)$ is not guaranteed to contain all the components of the nodal set of the eigenfunction.

The proof of the main theorem involves an interplay between rigid and
flexible properties of high-energy eigenfunctions of the Laplacian. Indeed, rigidity
appears because high-energy eigenfunctions in any Riemannian $n$-manifold behave,
locally in sets of diameter $1/\sqrt\lambda$, as monochromatic waves in $\RR^n$
do in balls of diameter~1. We recall that a monochromatic wave is any solution to the Helmholtz equation $\Delta \phi+\phi=0$. The catch here is
that, in general, one cannot check whether a given monochromatic wave in
$\RR^n$ actually corresponds to a high-energy eigenfunction on the
compact manifold.

To prove the converse implication, what we call an inverse localization theorem (see Sections~\ref{IL_sphere} and~\ref{IL_torus}), it is key to exploit some flexibility that arises in the problem as a consequence of the fact that large eigenvalues of the Laplacian in the torus or in the sphere have increasingly high multiplicities (for this reason the proof does not work in a general
Riemannian manifold). The inverse localization is a powerful tool to ensure that any monochromatic wave in a compact set of $\mathbb R^n$ can be reproduced in a small ball of the manifold by a high-energy eigenfunction. This allows us to transfer any structurally stable nodal set that can be realized in Euclidean space to high-energy eigenfunctions on $\mathbb S^n$ and $\mathbb T^n$. The inverse localization was first introduced in~\cite{EPT} to construct high-energy Beltrami fields on the torus and the sphere with topologically complicated vortex structures, and was also exploited in~\cite{EPH1,EPH2} to solve a problem of M. Berry~\cite{Berry} on knotted nodal lines of high-energy eigenfunctions of the harmonic oscillator and the hydrogen atom, and in~\cite{T18} to analyze the nodal sets of the eigenfunctions of the Dirac operator.

One should notice that the techniques introduced in~\cite{EP}
to prove the existence of solutions to second-order elliptic PDEs in $\RR^n$ (including the monochromatic waves) with a prescribed
nodal set $\Sigma$ do not work for
compact manifolds. The reason is that the proof is based on the
construction of a local solution in a neighborhood of $\Sigma$,
which is then approximated by a global solution in $\RR^n$ using
a Runge-type global approximation theorem. For compact manifolds the
complement of the set $\Sigma$ is precompact, so we cannot apply the
global approximation theorem obtained in~\cite{EP}. In fact,
as is well known, this is not just a technical issue, but a
fundamental obstruction in any approximation theorem of this
sort. This invalidates the whole strategy followed
in~\cite{EP} and makes it apparent that new tools are needed
to prove the existence of Laplace eigenfunctions with geometrically complex
nodal sets in compact manifolds.

We finish this introduction with two corollaries. It is known that an oriented codimension one or two submanifold in $\SS^n$ or $\TT^n$ has
trivial normal bundle~\cite{Ma59}, therefore the main theorem implies the following:

\begin{corollary}
Let $\Sigma$ be an oriented, compact, connected hypersurface in $\SS^n$ or $\TT^n$, $n\geq 3$. In the case of the torus, we further assume that $\Sigma$ is contained in a contractible subset. Then for any large
enough odd integer $N$ there
is an eigenfunction $\psi$ of the Laplacian with
eigenvalue $\la=N(N+n-2)$ (in $\SS^n$) or $\lambda=N^2$ (in $\TT^n$), and a diffeomorphism $\Phi$ such that
$\Phi(\Sigma)$ is a structurally stable connected component of the nodal set
$\psi^{-1}(0)$.
\end{corollary}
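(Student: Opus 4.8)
The plan is to deduce this corollary directly from Theorem~\ref{T.high} specialized to codimension $m=1$; the only point needing a short argument is the verification of the normal bundle hypothesis. First I would note that, since $\Sigma$ is a connected orientable hypersurface of $\SS^n$ or $\TT^n$ and the ambient manifold is itself orientable, the normal bundle $\nu(\Sigma)$ is an orientable real line bundle over $\Sigma$. A real line bundle is trivial precisely when it is orientable (equivalently, when its first Stiefel--Whitney class vanishes), so $\nu(\Sigma)$ is trivial and a tubular neighborhood of $\Sigma$ is diffeomorphic to $\Sigma\times\RR$. This is the elementary codimension-one instance of the result recorded in~\cite{Ma59}, that orientable codimension one or two submanifolds of spheres (and of orientable manifolds) have trivial normal bundle.

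With this in hand, all hypotheses of Theorem~\ref{T.high} are satisfied with $m=1$: $\Sigma$ is a single compact submanifold with trivial normal bundle, it is connected (as required in the case $m=1$), and in the torus case it lies in a contractible subset by assumption. The theorem then yields, for every sufficiently large odd integer $N$, an eigenfunction $\psi:=\psi_1$ with eigenvalue $N(N+n-2)$ on $\SS^n$ (respectively $N^2$ on $\TT^n$) together with a diffeomorphism $\Phi$ such that $\Phi(\Sigma)$ is a union of connected components of the nodal set $\psi^{-1}(0)$ and is structurally stable. Since $\Sigma$ is connected and $\Phi$ is a diffeomorphism, $\Phi(\Sigma)$ is connected, hence it is a single connected component of $\psi^{-1}(0)$, which is exactly the assertion of the corollary.

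There is essentially no additional obstacle at the level of the corollary itself: all of the analytic and topological work is carried by Theorem~\ref{T.high}, whose proof combines the inverse localization theorems for $\SS^n$ and $\TT^n$ with the realization of structurally stable nodal sets by monochromatic waves in $\RR^n$. The one place where care is needed is that Theorem~\ref{T.high} only guarantees $\Phi(\Sigma)$ to be a \emph{union} of nodal components---the eigenfunction may well have further nodal components elsewhere on the manifold---so the connectedness hypothesis on $\Sigma$ is genuinely used in order to conclude that $\Phi(\Sigma)$ is a single component.
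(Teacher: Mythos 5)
Your proposal is correct and follows essentially the same route as the paper: the paper likewise deduces the corollary from Theorem~\ref{T.high} with $m=1$ by invoking the triviality of the normal bundle of an oriented codimension-one submanifold (citing~\cite{Ma59}), and your elementary justification---an orientable real line bundle over $\Sigma$ inside an orientable ambient manifold is trivial---is a perfectly adequate substitute for that citation. The final observation that connectedness of $\Sigma$ upgrades ``union of connected components'' to ``a single connected component'' is exactly the point of the connectedness hypothesis, as you note.
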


\begin{corollary}
Let $\Sigma$ be a finite union of (disjoint, possibly knotted or linked) codimension two compact
submanifolds in $\SS^n$ or $\TT^n$, $n\geq 3$. In the case of the torus, we further assume that $\Sigma$ is contained in a contractible subset. Then for any large enough odd integer $N$ there
is a complex-valued eigenfunction $\psi$ of the Laplacian with
eigenvalue $\la=N(N+n-2)$ (in $\SS^n$) or $\lambda=N^2$ (in $\TT^n$), and a diffeomorphism $\Phi$ such that
$\Phi(\Sigma)$ is a union of structurally stable connected components of the nodal set
$\psi^{-1}(0)$.
\end{corollary}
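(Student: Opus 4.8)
The plan is to deduce this statement directly from Theorem~\ref{T.high} by packaging two real eigenfunctions into a single complex one. First I would observe that, since $\Sigma$ has codimension $m=2$, its normal bundle in $\SS^n$ or $\TT^n$ is trivial by the result of Massey~\cite{Ma59} recalled above, so $\Sigma$ satisfies the hypotheses of Theorem~\ref{T.high} (in the torus case the assumption that $\Sigma$ lies in a contractible subset is inherited verbatim). Applying that theorem with $m=2$, one obtains, for every large enough odd integer $N$, two eigenfunctions $\psi_1,\psi_2$ of the Laplacian with the stated eigenvalue $\lambda=N(N+n-2)$ on $\SS^n$ (resp. $\lambda=N^2$ on $\TT^n$) and a diffeomorphism $\Phi$, connected with the identity, such that $\Phi(\Sigma)$ is a structurally stable union of connected components of the joint nodal set $\psi_1^{-1}(0)\cap\psi_2^{-1}(0)$.

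Next I would set $\psi:=\psi_1+\I\,\psi_2$. Since the Laplacian is a real operator and $\psi_1,\psi_2$ share the same eigenvalue, $\psi$ is a (nonzero) complex-valued eigenfunction with $\Delta\psi=-\lambda\psi$; it is nonzero because $\psi_1$, having a nontrivial nodal set, cannot vanish identically. As subsets of $\SS^n$ or $\TT^n$ one has the identity $\psi^{-1}(0)=\psi_1^{-1}(0)\cap\psi_2^{-1}(0)$, because a complex number vanishes if and only if both its real and imaginary parts do. Hence the zero set of $\psi$ and the joint nodal set of $\psi_1$ and $\psi_2$ coincide set-theoretically and, in particular, have exactly the same connected components; therefore $\Phi(\Sigma)$ is a union of connected components of $\psi^{-1}(0)$.

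Finally, for the structural stability I would note that a $C^k$-small perturbation $\psi+\delta$ of the complex eigenfunction is precisely the pair $(\psi_1+\Real\delta,\ \psi_2+\Imag\delta)$ of $C^k$-small perturbations of the two real eigenfunctions, and conversely; since the perturbed zero set $(\psi+\delta)^{-1}(0)$ again equals the joint zero set of the perturbed pair, the structural stability of $\Phi(\Sigma)$ furnished by Theorem~\ref{T.high} translates at once into structural stability of the corresponding connected components of $\psi^{-1}(0)$. I do not expect a genuine obstacle here: the argument is a short deduction, and the only points deserving a word of care are the triviality of the normal bundle in codimension two (supplied by~\cite{Ma59}) and the elementary but essential observation that the notion of structural stability is insensitive to whether the nodal set is viewed as the joint zero set of the pair $(\psi_1,\psi_2)$ or as the zero set of the single function $\psi_1+\I\psi_2$.
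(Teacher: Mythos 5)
Your argument is correct and is exactly the route the paper takes: the corollary is stated as an immediate consequence of Theorem~\ref{T.high} with $m=2$, using Massey's theorem for the triviality of the normal bundle and the observation that $(\psi_1+\I\psi_2)^{-1}(0)=\psi_1^{-1}(0)\cap\psi_2^{-1}(0)$. Your extra remark that structural stability is insensitive to viewing the nodal set as a joint zero set of the pair versus the zero set of the complex function is the right (and only) point needing care, and you handle it correctly.
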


The paper is organized as follows. In Sections~\ref{IL_sphere} and~\ref{IL_torus} we prove an inverse localization theorem for the eigenfunctions of the Laplacian on $\SS^n$ and $\TT^n$, respectively. Theorem~\ref{T.high} is then proved in Section~\ref{P.main}. Finally, in Section~\ref{multiple}, we prove a refinement of the inverse localization Theorem on $\SS^n$ that allows us to approximate several given monochromatic waves by a single eigenfunction of the Laplacian in different small regions of $\SS^n$.

\section{An inverse localization theorem on the sphere}\label{IL_sphere}

In this section we prove an inverse localization theorem for eigenfunctions of the Laplacian on $\SS^n$ for $n\geq 2$. We recall that the eigenvalues of the Laplacian on the $n$-sphere are of the form $N(N+n-1)$, where $N$ is a nonnegative integer, and the corresponding multiplicity is given by
\[
d(N, n):=\binom{N+n-1}{N} \frac{2N+n-1}{N+n-1}\,.
\]

For the precise statement of the theorem, let us fix an arbitrary point $p_0\in \SS^n$ and take a patch of normal geodesic coordinates
$\Psi:\BB\to B$ centered at $p_0$. Here and in what follows, $B_\rho$ (resp.~$\BB_\rho$) denotes the
ball in~$\RR^n$ (resp.\ the geodesic ball
in~$\SS^n$) centered at the origin (resp.\ at $p_0$) and of radius $\rho$, and we shall drop the subscript when $\rho=1$. For the ease of notation, we will use the $\RR^m$-valued functions $\phi:=(\phi_1,\cdots,\phi_m)$ and $\psi:=(\psi_1,\cdots,\psi_m)$, and the action of the Laplacian on such functions is understood componentwise.

\begin{theorem}\label{T.spharm1}
Let $\phi$ be an $\RR^m$-valued monochromatic wave in $\RR^n$, satisfying
$\Delta \phi+\phi=0$. Fix a positive integer $r$ and a positive constant $\de'$. For any large enough integer $N$, there is an $\RR^m$-valued eigenfunction $\psi$ of the Laplacian on
$\SS^n$ with eigenvalue $N(N+n-1)$ such that
\begin{equation*}
\bigg\|\phi-\psi\circ \Psi^{-1}\Big(\frac\cdot N\Big)\bigg\|_{C^{r}(B)}\leq\de'\,.
\end{equation*}

\end{theorem}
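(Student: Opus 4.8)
The plan is to realize the prescribed monochromatic wave $\phi$ as a rescaled limit of spherical harmonics by expanding it in a spectral basis adapted to the sphere. First I would recall that a monochromatic wave admits a representation as a (generalized) Fourier integral supported on the unit sphere $\SS^{n-1}\subset\RR^n$: by the Herglotz-type representation, $\phi(x)=\int_{\SS^{n-1}} e^{\iu\, x\cdot\xi}\,d\mu(\xi)$ for a suitable finite measure (or, after regularizing, a smooth density) $\mu$ on $\SS^{n-1}$. Since the convergence we need is only in $C^r(B)$ on the fixed unit ball, I can first approximate $\phi$ by a wave whose density is a finite linear combination of spherical harmonics on $\SS^{n-1}$, equivalently by a finite sum of the special solutions $x\mapsto j_\ell(|x|)Y_{\ell}(x/|x|)$ built from Bessel functions; this reduces the problem to the case where $\phi$ is one such ``pure'' mode, by linearity.

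Next I would set up the comparison on the sphere. Writing points of $\SS^n$ near $p_0$ in geodesic polar coordinates $(\theta,\omega)\in[0,\pi]\times\SS^{n-1}$, the eigenfunctions of eigenvalue $N(N+n-1)$ decompose as $\psi=\sum_\ell f_{N,\ell}(\theta)\,Y_\ell(\omega)$, where $f_{N,\ell}$ solves the associated Legendre-type ODE (a Gegenbauer/Jacobi equation). The key analytic input is the classical local uniform asymptotics of these radial eigenfunctions in the ``central'' regime $\theta=t/N$ with $t$ in a compact set: after the correct normalization, $f_{N,\ell}(t/N)\to c\, j_\ell(t)$ together with all derivatives, uniformly for $t\in[0,1]$, as $N\to\infty$. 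This is the spherical analogue of the Mehler–Heine asymptotics for Jacobi polynomials, and it is exactly the statement that rescaled zonal (and tesseral) harmonics converge to Bessel-type solutions of the Helmholtz equation. So for a pure mode $\phi=j_\ell(|x|)Y_\ell(x/|x|)$ I choose $\psi$ to be the corresponding harmonic $f_{N,\ell}(\theta)Y_\ell(\omega)$ with the normalizing constant; then $\psi\circ\Psi^{-1}(\cdot/N)\to\phi$ in $C^r(B)$. For a general $\phi$, I take the finite linear combination dictated by the approximating density from the first step, choose the same combination of harmonics on $\SS^n$, and conclude by the triangle inequality, absorbing the approximation error into $\delta'$ and the asymptotic error into largeness of $N$.

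The main obstacle, and the step I expect to be most delicate, is making the rescaled radial asymptotics uniform simultaneously in $t\in[0,1]$, in the derivative order up to $r$, and across the finitely many modes $\ell$ that appear — in particular controlling the behavior near $t=0$ where the Bessel functions $j_\ell$ have their characteristic vanishing to order $\ell$, and ensuring the normalization constants are chosen consistently. I would handle this either by invoking a uniform Mehler–Heine theorem for Jacobi polynomials (with explicit error bounds) or, more self-containedly, by a WKB/stationary-phase analysis of the integral representation of $f_{N,\ell}$, differentiating under the integral sign and tracking the $1/N$ corrections; the $C^r$ control on derivatives then follows because differentiation in $x$ pulls down factors that are uniformly bounded in the central regime. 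A secondary point to be careful about is that the coordinate map $\Psi$ is geodesic normal, so the Euclidean Laplacian in the rescaled variable $x=N\theta\,\omega$ differs from $N^{-2}$ times the spherical Laplacian by terms of size $O(\theta^2)=O(1/N^2)$ on $B$; these curvature corrections are lower order and do not affect the limit, but they must be checked to vanish in the $C^r$ norm as $N\to\infty$.
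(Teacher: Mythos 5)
Your argument is essentially correct, but it takes a genuinely different route in the key approximation step. The paper also starts from the Fourier--Bessel expansion of $\phi$ in $B_2$ and truncates it, but then it does \emph{not} match each mode $j_\ell(r)Y_{\ell k}(\om)$ with a degree-$N$ harmonic of the same angular momentum. Instead it passes through the Herglotz representation, approximates the density on $\SS^{n-1}$ by a finite sum of exponentials $e^{-\iu x_j\cdot\xi}$ (a Riemann-sum discretization), and thereby rewrites the approximant as a finite sum of \emph{translated} zonal waves $c_j\,|x-x_j|^{1-n/2}J_{n/2-1}(|x-x_j|)$. Each translate is then matched with a single ultraspherical polynomial $C^n_N(p\cdot p_j)$ with pole at $p_j=\Psi^{-1}(x_j/N)$, so only one Mehler--Heine/Darboux asymptotic, with the fixed Jacobi parameter $\tfrac n2-1$, is ever needed; and the $C^0$ bound is upgraded to $C^r$ not by differentiating the asymptotics but by an elliptic bootstrap, using that the rescaled eigenfunction satisfies $\De\tpsi+\tpsi=\tfrac1N A\tpsi$ while $\vphi$ satisfies Helmholtz exactly. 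Your scheme — separation of variables in geodesic polar coordinates and Mehler--Heine asymptotics $f_{N,\ell}(t/N)\to c_\ell\, j_\ell(t)$ for each of the finitely many $\ell\leq L$ in the truncation — is a valid alternative, at the cost of handling a family of Jacobi/Gegenbauer asymptotics whose parameters depend on $\ell$, uniformly with derivatives; this is exactly the point you flag as delicate, and it can be bypassed entirely by the same elliptic-regularity trick the paper uses (both your candidate and its limit satisfy the Helmholtz equation up to $O(1/N)$ terms, so a $C^0$ bound suffices). The paper's choice of shifted zonal functions also pays off later: the decay of $C^n_N(p\cdot q)$ away from $q$ and $-q$ is what drives the multi-point localization of Section~\ref{multiple}.

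Two minor cautions. First, your opening appeal to a Herglotz representation $\phi(x)=\int_{\SS^{n-1}}e^{\iu x\cdot\xi}\,d\mu(\xi)$ with $\mu$ a finite measure is false for a general monochromatic wave (there exist exponentially growing solutions of $\De\phi+\phi=0$, whose Herglotz ``densities'' are only hyperfunctions); this does not hurt you because the step you actually use — truncating the Fourier--Bessel series, which converges in $L^2(B_2)$ for any solution, and then invoking interior elliptic estimates — needs no such representation. Second, make sure the normalizations $c_\ell$ and the identification of the rescaled radial variable ($\theta=|x|/N$ in normal coordinates) are fixed consistently across the modes before summing; with those details in place the triangle-inequality conclusion is sound.
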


To prove Theorem~\ref{T.spharm1}, we will proceed in two successive approximation steps. First, we will approximate the function $\phi$ in $B$ by an $\RR^m$-valued function $\vphi$ that can be written as a finite sum of terms of the form
\[
\frac{c_{j}}{|x-x_j|^{\frac{n}{2}-1}}J_{\frac{n}{2}-1}(|x-x_j|)
\]
with $c_j\in \RR^m$ and $x_j\in\RR^n$, $j=1,...,N'$, for $N'$ large enough (Proposition~\ref{P.Bessel1} below). Notice that any function of this form is a monochromatic wave. In the second step, we show that there is a collection of $m$ eigenfunctions $(\psi_1,...,\psi_m)=:\psi$ in $\SS^n$ with eigenvalue $N(N+n-1)$ such that, when considered in a ball of radius $N^{-1}$, they approximate $\vphi:=(\vphi_{1},...,\vphi_{m})$ in the unit ball, provided that $N$ is large enough.
\begin{proposition}\label{P.Bessel1}
Given any $\de>0$, there is a constant $R>0$ and finitely many
constant vectors $\{c_j\}_{j=1}^{N'}\subset \RR^m$ and points $\{x_j\}_{j=1}^{N'}\subset B_R$ such that the
function
\[
\vphi:=\sum_{j=1}^{N'} \frac{c_j}{|x-x_j|^{\frac{n}{2}-1}} J_{\frac{n}{2}-1}(|x-x_j|)
\]
approximates the function~$\phi$ in the unit ball as
\[
\|\phi-\vphi\|_{C^{r}(B)}<\de\,.
\]
\end{proposition}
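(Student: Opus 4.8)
The plan is to combine the classical ``partial wave'' (Bochner) expansion of monochromatic waves with the density on the unit sphere of the restrictions of the plane waves $e^{\iu\om\cdot x}$, $x\in\RR^n$. The starting point is that the Fourier transform of the surface measure of $\SS^{n-1}$ is, up to a nonzero dimensional constant $\ka_n$, precisely the radial wave in the statement:
\begin{equation}\label{E.pw}
\frac{J_{\frac n2-1}(|y|)}{|y|^{\frac n2-1}}=\frac1{\ka_n}\int_{\SS^{n-1}}e^{\iu\om\cdot y}\,\dd\si(\om)\qquad(y\in\RR^n),
\end{equation}
where $\dd\si$ denotes the standard measure on $\SS^{n-1}$. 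Consequently, any finite sum of the type in the statement equals the Herglotz-type wave $\ka_n^{-1}\int_{\SS^{n-1}}e^{\iu\om\cdot x}g(\om)\,\dd\si(\om)$ with amplitude $g(\om)=\sum_j c_j e^{-\iu\om\cdot x_j}$, and, conversely, since $B$ is bounded and $|e^{\iu\om\cdot x}|=1=|\om^\al|$ on $\SS^{n-1}\times B$, differentiating under the integral sign gives
\begin{equation}\label{E.dui}
\Big\|\int_{\SS^{n-1}}e^{\iu\om\cdot x}h(\om)\,\dd\si(\om)\Big\|_{C^{r}(B)}\leq C_{n,r}\,\|h\|_{C^{0}(\SS^{n-1})}
\end{equation}
for every continuous $\CC^m$-valued $h$ on $\SS^{n-1}$. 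It therefore suffices to establish two approximation statements: $(i)$ $\phi|_B$ can be made $C^{r}(B)$-close to a Herglotz wave $\int_{\SS^{n-1}}e^{\iu\om\cdot x}f(\om)\,\dd\si(\om)$ with $f$ continuous and satisfying $\overline{f(-\om)}=f(\om)$; and $(ii)$ any such $f$ can be made $C^{0}(\SS^{n-1})$-close to a function of the form $g(\om)=\sum_j c_j e^{-\iu\om\cdot x_j}$ with $c_j\in\RR^m$ and $x_j\in\RR^n$.

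For $(i)$ I would invoke the classical expansion of a solution of the Helmholtz equation on a ball in spherical (partial) waves: restricting $\phi$ to $B_2$ and expanding over the spheres $\{|x|=\rho\}$, regularity at the origin forces
\[
\phi(x)=\sum_{\ell,k}a_{\ell k}\,|x|^{-(\frac n2-1)}J_{\ell+\frac n2-1}(|x|)\,Y_{\ell k}\!\big(\tfrac{x}{|x|}\big),
\]
a series that converges in $C^{r}(\overline{B_1})$ because $\phi$ is smooth on a strictly larger ball while $J_{\ell+\frac n2-1}(\rho)$ decays faster than any power of $\ell$ for bounded $\rho$. Truncating at $\ell\leq L$ produces a finite sum $\phi_L$ with $\|\phi-\phi_L\|_{C^{r}(B)}<\de/2$, and Bochner's relation $\int_{\SS^{n-1}}e^{\iu\om\cdot x}Y_{\ell k}(\om)\,\dd\si(\om)=\ka_n\iu^{\ell}|x|^{-(\frac n2-1)}J_{\ell+\frac n2-1}(|x|)Y_{\ell k}(x/|x|)$ rewrites $\phi_L$ as a Herglotz wave whose amplitude $f$ is a finite combination of spherical harmonics, hence continuous. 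Finally, replacing $f(\om)$ by $\tfrac12\big(f(\om)+\overline{f(-\om)}\big)$ does not change the (real-valued) Herglotz wave, so we may assume $\overline{f(-\om)}=f(\om)$.

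For $(ii)$ I would apply the complex Stone--Weierstrass theorem componentwise: the complex span of $\{\om\mapsto e^{-\iu\om\cdot x}:x\in\RR^n\}$ is a subalgebra of $C(\SS^{n-1};\CC)$ that contains the constants, is closed under complex conjugation, and separates points, hence is dense. Thus each of the $m$ components of $f$ is $C^{0}(\SS^{n-1})$-close to a finite exponential sum; collecting the finitely many points $x_j$ that occur yields $g(\om)=\sum_j c_j e^{-\iu\om\cdot x_j}$ with $c_j\in\CC^m$ and $\|f-g\|_{C^{0}(\SS^{n-1})}<\de/(2C_{n,r})$. Symmetrizing once more, $\tg(\om):=\tfrac12\big(g(\om)+\overline{g(-\om)}\big)=\sum_j\Real(c_j)\,e^{-\iu\om\cdot x_j}$ is still that close to $f$ (which already has the symmetry) and now has real coefficient vectors. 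Setting $R:=1+\max_j|x_j|$ and $c_j':=\ka_n\Real(c_j)\in\RR^m$, the identity \eqref{E.pw} shows $\vphi:=\sum_j c_j'|x-x_j|^{-(\frac n2-1)}J_{\frac n2-1}(|x-x_j|)=\int_{\SS^{n-1}}e^{\iu\om\cdot x}\tg(\om)\,\dd\si(\om)$, and combining $(i)$, $(ii)$ and \eqref{E.dui} gives $\|\phi-\vphi\|_{C^{r}(B)}<\de/2+C_{n,r}\cdot\de/(2C_{n,r})=\de$, as required.

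The only ingredient that is not soft analysis is step $(i)$: the partial-wave representation of monochromatic waves and the convergence of its truncations in $C^{r}(B)$, both classical facts about the Helmholtz equation. The genuine flexibility of the construction resides in the Stone--Weierstrass density of step $(ii)$, which lets one synthesize an arbitrary angular profile on $\SS^{n-1}$ --- and hence arbitrarily high-degree spherical-harmonic content --- out of translates of the single radial wave $|x|^{-(\frac n2-1)}J_{\frac n2-1}(|x|)$; the reality of the $c_j$ is then recovered by the elementary symmetrization $g\mapsto\tfrac12\big(g+\overline{g(-\,\cdot\,)}\big)$.
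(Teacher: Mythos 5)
Your argument is correct and shares the paper's overall architecture: expand $\phi$ in partial waves on a slightly larger ball, truncate, pass to a Herglotz representation $\int_{\SS^{n-1}}e^{\iu\om\cdot x}f(\om)\,d\si(\om)$, approximate the amplitude $f$ uniformly by an exponential sum $\sum_j c_j e^{-\iu\om\cdot x_j}$, and recognize the resulting wave as a sum of shifted Bessel functions via the Fourier transform of the surface measure. The genuine divergence is in the amplitude-approximation step: the paper extends $f$ to a compactly supported smooth function on $\RR^n$, truncates its (Schwartz) Fourier transform to a ball $B_R$, and discretizes the remaining integral by a Riemann sum, which produces the points $x_j$ already inside $B_R$; you instead invoke the complex Stone--Weierstrass theorem for the algebra generated by $\{e^{-\iu\om\cdot x}\}$. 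Your route is softer and shorter, at the cost of losing any a priori location for the $x_j$ --- but since the proposition only asserts the existence of \emph{some} $R$, and Proposition~\ref{P.spharm2} only needs $R$ fixed before $N\to\infty$, your choice $R:=1+\max_j|x_j|$ is perfectly adequate. Two smaller differences: you handle reality by the symmetrization $g\mapsto\tfrac12(g+\overline{g(-\cdot)})$ rather than the paper's $\tphi=\phi+\iu\phi$ trick (both work, and your computation that this preserves the form of the exponential sum with $\Real(c_j)$ as coefficients is right); and you claim $C^r(\overline{B})$ convergence of the truncated partial-wave series directly, whereas the paper truncates only in $L^2(B_2)$ and upgrades to $C^r(B)$ at the very end by elliptic regularity. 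Your claim is true, but the one-line justification is slightly loose: the decay of $J_{\ell+\frac n2-1}$ in $\ell$ alone is not enough; one needs to combine the rapid decay of $a_{\ell k}J_{\ell+\frac n2-1}(\rho)$ at a radius $\rho>1$ (from smoothness of $\phi$ on $\{|x|=\rho\}$) with the ratio estimate $J_\nu(t)/J_\nu(\rho)\approx(t/\rho)^{\nu}$ for $t\leq 1<\rho$ and large $\nu$, or else simply fall back on the paper's $L^2$-plus-elliptic-regularity device, which is available since every function in sight solves the Helmholtz equation. Neither point is a gap.
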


\begin{proof}
It is more convenient to work with complex-valued functions, so we set $\tphi:=\phi+i \phi$. First, we notice that, since $\tphi$ is also a solution of the Helmholtz equation, it can be written in the
ball $B_2$ as an expansion
\begin{equation}\label{fbser}
\tphi=\sum_{l=0}^\infty\sum_{k=1}^{d(l,n-1)} b_{lk}\,j_{l}(r)\, Y_{lk}(\om),
\end{equation}
where $r:=|x|\in\RR^+$ and $\om:=x/r\in\SS^{n-1}$ are spherical coordinates in $\RR^{n}$, $Y_{lk}$ is a basis of spherical harmonics of eigenvalue $l(l+n-2)$, $j_l$ are $n$-dimensional hyperspherical Bessel functions and $b_{lk}\in\CC^{m}$ are constant coefficients.

The series in \eqref{fbser} is convergent in the $L^2$ sense, so for any $\de'>0$, we can truncate the sum at some integer $L$
\begin{equation}\label{Lrvb}
\phi_1:=\sum_{l=0}^{L}\sum_{k=1}^{d(l,n-1)} b_{lk}\, j_{l}(r)\, Y_{lk}(\om)
\end{equation}
so that it approximates $\tphi$ as
\begin{equation}\label{L2b}
\|\phi_1-\tphi\|_{L^2(B_2)}<\de'\,.
\end{equation}

The $\CC^m$-valued function $\phi_1$ decays as $|\phi_{1}(x)|\leq C/|x|^{\frac{n-1}{2}}$ for large enough $|x|$ (because of the decay properties of the spherical Bessel functions). Hence, Herglotz's
theorem (see e.g.~\cite[Theorem 7.1.27]{Hormander}) ensures that we can write
\begin{equation}
\phi_1(x)=\int_{\SS^{n-1}}f_1(\xi)\, e^{ix\cdot\xi}\, d\si(\xi)\,,
\end{equation}
where $d\si$ is the area measure on $\SS^{n-1}:=\{\xi\in\RR^n:|\xi|=1\}$ and $f_1$ is a $\CC^{m}$-valued function in $L^2(\SS^{n-1})$.

We now choose a smooth $\CC^{m}$-valued function $f_2$ approximating $f_1$ as
\[
\|f_1-f_2\|_{L^2(\SS^{n-1})}<\de'\,,
\]
which is always possible since smooth functions are dense in $L^2(\SS^{n-1})$. The function defined as the inverse Fourier transform of $f_2$,
\begin{equation}
\phi_2(x):=\int_{\SS^{n-1}}f_2(\xi)\,e^{ix\cdot\xi}\, d\si(\xi)\,,
\end{equation}
approximates $\phi_1$ uniformly: by the
Cauchy--Schwarz inequality, we get
\begin{align}
|\phi_2(x)-\phi_1(x)|=\bigg|\int_{\SS^{n-1}}(f_2(\xi)-f_1(\xi))\,e^{ix\cdot\xi}\,
  d\si(\xi)\bigg|\leq C\|f_2-f_1\|_{L^2(\SS^{n-1})}<C\de'\, \label{eqv2v1}
\end{align}
for any $x\in\RR^n$.

Our next objective is to approximate the function $f_2$ by a trigonometric polynomial: for any given $\de'$, we will find a constant $R>0$,
finitely many points
$\{x_j\}_{j=1}^{N'}\subset B_R$ and constants $\{c_j\}_{j=1}^{N'}\subset\CC^{m}$ such that the smooth function in $\RR^n$
$$
f(\xi):=\frac{1}{(2\pi)^{\frac{n}{2}}}\sum_{j=1}^{N'} c_j \, e^{-ix_j\cdot \xi}\,,
$$
when restricted to the unit sphere, approximates $f_2$ in the $C^0$ norm,
\begin{equation}\label{eqqma}
\|f-f_2\|_{C^0(\SS^{n-1})}<\de'\,.
\end{equation}

In order to do so, we begin by extending $f_2$ to a smooth function $g:\RR^n\to \CC^{m}$ with compact support,
$$
g(\xi):=\chi(|\xi|)\, f_2\bigg(\frac{\xi}{|\xi|}\bigg)\,,
$$
where $\chi(s)$ is a real-valued smooth bump function, being $1$ when, for example,
$|s-1|<\frac14$, and vanishing for $|s-1|>\frac12$. The Fourier
transform $\hg$ of $g$ is Schwartz, so it is easy to see that, outside some ball $B_R$, the $L^1$~norm of $\hg$ is very small,
\[
\int_{\RR^n\backslash B_R}|\hg(x)|\, dx<\de'\,,
\]
and therefore we get a very good approximation of $g$ by just considering its Fourier representation with frequencies within the ball $B_R$, that is,
\begin{equation}\label{Rlarge}
\sup_{\xi\in\RR^n}\bigg|g(\xi)-\int_{B_R}\hg(x)\, e^{-ix\cdot\xi}\, dx\bigg|<\de'/2\,.
\end{equation}

Next, let us show that we can approximate the integral
$$
\int_{B_R}\hg(x)\,e^{-ix\cdot\xi}\, dx
$$
by the sum
\begin{equation}\label{fcn}
f(\xi):=\frac{1}{(2\pi)^{\frac{n}{2}}}\sum_{j=1}^{N'} c_j \, e^{-ix_j\cdot \xi}
\end{equation}
with constants $c_j\in\CC^{m}$ and points $x_j\in B_R$, so that we have the bound
\begin{equation}\label{eqdisc}
\sup_{\xi\in \SS^{n-1}}\bigg|\int_{B_R}\hg(x)\, e^{-ix\cdot\xi}\, dx-f(\xi)\bigg|<\de'/2\,.
\end{equation}

Indeed, consider a covering of the ball $B_R$ by closed sets
$\{U_j\}_{j=1}^{N'}$, with piecewise smooth boundaries, pairwise
disjoint interiors, and diameters not exceeding $\de''$. Since the
function~$e^{-ix\cdot\xi}\, \hg(x)$ is smooth, we have that for each $x,y\in U_j$
\[
\sup_{\xi\in\SS^{n-1}}\big|\hg(x)\, e^{-ix\cdot\xi}-\hg(y)\, e^{-i y\cdot\xi}|< C\de''\,,
\]
with the constant $C$ depending on~$\hg$ (and therefore on~$\de'$) but
not on $\de''$. If $x_j$ is any point
in~$U_j$ and we set $c_j:=(2\pi)^{\frac{n}{2}}\hg(x_j)\,|U_j|$ in~\eqref{fcn}, we get
\begin{align*}
\sup_{\xi\in\SS^{n-1}}\bigg|\int_{B_R}\hg(x)\, e^{-ix\cdot\xi}\,
  dx-f(\xi)\bigg|&\leq \sum_{j=1}^{N'}\int_{U_j} \sup_{\xi\in\SS^{n-1}}\big|\hg(x)\,\e^{-i
                   x\cdot\xi}-\hg(x_j)\, e^{-ix_j\cdot\xi}\big|\, dx\\
&\leq C\de''\,,
\end{align*}
with $C$ depending on $\de'$ and $R$ but not on $\de''$ nor $N'$. By taking ~$\de''$ so that $C\de''<\de'/2$, the estimate~\eqref{eqdisc} follows.

Now, in view of ~\eqref{Rlarge} and~\eqref{eqdisc}, one has
\begin{equation*}
\|f-g\|_{C^0(\SS^{n-1})}<\de'\,,
\end{equation*}
so the estimate~\eqref{eqqma} follows upon noticing that the function~$f_2$ is the restriction
to~$\SS^{n-1}$ of the function~$g$.

To conclude, set
\begin{multline*}
\tvphi(x):=\int_{\SS^{n-1}}f(\xi)\, e^{ix\cdot\xi}\, d\si(\xi)=\sum_{j=1}^{N'}
\frac{c_j}{(2\pi)^{\frac{n}{2}}} \int_{\SS^{n-1}}e^{i(x-x_{j})\cdot \xi}\,d\si(\xi)=\\=\sum_{j=1}^{N'}
\frac{c_j}{|x-x_j|^{\frac{n}{2}-1}}J_{\frac{n}{2}-1}(|x-x_j|)\,,
\end{multline*}
then from Equation~\eqref{eqqma} we infer that
\begin{equation*}
\|\tvphi-\phi_2\|_{C^0(\RR^n)}\leq \int_{\SS^{n-1}}|f(\xi)-f_2(\xi)|\, d\si(\xi)<C\de'\,,
\end{equation*}
and from Equations~\eqref{L2b} and~\eqref{eqv2v1} we get the
$L^2$ estimate
\begin{multline}\label{cas}
\|\tphi-\tvphi\|_{L^2(B_2)}\leq C\|\tvphi-\phi_2\|_{C^0(\RR^n)}+C\|\phi_2-\phi_1\|_{C^0(\RR^n)}+\\+\|\phi_1-\tphi\|_{L^2(B_2)}<C\de'\,.
\end{multline}
Furthermore, both $\tvphi$ and $\tphi$ are $\CC^{m}$-valued functions satisfying the Helmholtz equation in $\RR^n$ (note that the Fourier transform of~$\tvphi$ is supported on~$\SS^{n-1}$), so by standard elliptic regularity estimates we have
\begin{equation*}
 \|\tphi-\tvphi\|_{C^{r}(B)}\leq C\|\tphi-\tvphi\|_{L^2(B_2)}< C\delta'\,.
\end{equation*}
This in particular implies that
\[
 \|\phi-\text{Re }\tvphi\|_{C^{r}(B)}< C\delta'\,,
\]
and taking $\delta'$ small enough so that $C\delta'<\de$, resetting $c_j:=\text{Re } c_j$, and defining $\vphi:=\text{Re }\tvphi$, the proposition follows. \end{proof}

The second step consists in showing that, for any large enough integer
$N$, we can find an $\RR^{m}$-valued eigenfunction $\psi$ of the Laplacian on $\SS^n$ with
eigenvalue $N(N+n-1)$ that approximates, in the ball $\BB_{1/N}$, when appropriately rescaled, the function $\vphi$ in the unit ball. The proof is based on
asymptotic expansions of ultraspherical polynomials, and uses the representation of~$\vphi$ as a sum of shifted Bessel functions which we obtained in the previous proposition as a key ingredient. It is then straightforward that Theorem~\ref{T.spharm1} follows from Propositions~\ref{P.Bessel1} and~\ref{P.spharm2}, provided that $N$ is large enough and $\de$ is chosen so that $2\de<\de'$.

\begin{proposition}\label{P.spharm2}
Given a constant $\de>0$, for any large enough positive integer $N$ there is an $\RR^{m}$-valued eigenfunction $\psi$ of the Laplacian on
$\SS^n$ with eigenvalue~$N(N+n-1)$ satisfying
\begin{equation*}
\bigg\|\vphi-\psi\circ \Psi^{-1}\Big(\frac\cdot k\Big)\bigg\|_{C^{r}(B)}<\de\,.
\end{equation*}
\end{proposition}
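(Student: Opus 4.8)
The plan is to realize $\vphi$, which by Proposition~\ref{P.Bessel1} is a finite sum $\sum_j c_j|x-x_j|^{1-n/2}J_{n/2-1}(|x-x_j|)$ of shifted Bessel monochromatic waves, as the rescaled limit of a genuine eigenfunction on $\SS^n$. The first step is to recall that a zonal spherical harmonic of degree $N$ on $\SS^n$ centered at a point $q\in\SS^n$ is, up to a normalizing constant, the Gegenbauer (ultraspherical) polynomial $C_N^{(n-1)/2}(\cos\theta)$, where $\theta=\dist_{\SS^n}(\cdot,q)$; more generally, acting by the isometry group we may translate the pole $q$ freely, and taking real linear combinations of such zonal harmonics with poles $q_1,\dots,q_{N'}$ and coefficients in $\RR^m$ produces an $\RR^m$-valued eigenfunction with eigenvalue $N(N+n-1)$. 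The idea is to choose $q_j$ so that, in the normal coordinate patch $\Psi$ rescaled by $N$, the point $q_j$ sits at Euclidean position $x_j$ — concretely $q_j=\Psi(x_j/N)$ for $N$ large enough that $x_j/N\in\BB$, which is possible since all $x_j\in B_R$ with $R$ fixed.

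The heart of the argument is then a local asymptotic analysis: one must show that, after the rescaling $x\mapsto \Psi^{-1}(x/N)$ and with the correct normalization constant $a_N$, the zonal harmonic satisfies
\begin{equation*}
a_N\, C_N^{(n-1)/2}\!\big(\cos\dist_{\SS^n}(\Psi^{-1}(x/N),q_j)\big)\ \longrightarrow\ \frac{1}{|x-x_j|^{n/2-1}}\,J_{n/2-1}(|x-x_j|)
\end{equation*}
in $C^r(B)$ as $N\to\infty$. This is the Mehler--Heine type asymptotics for Gegenbauer polynomials: as $N\to\infty$ with $\theta=t/N$ fixed, $C_N^{(n-1)/2}(\cos(t/N))$ behaves, after the natural normalization, like $t^{1-n/2}J_{n/2-1}(t)$. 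What makes the geometry work out is that in normal geodesic coordinates centered at $q_j$ one has $\dist_{\SS^n}(\Psi^{-1}(x/N),q_j)=|x-x_j|/N+O(N^{-3})$ uniformly for $x$ in a fixed ball, so $N\cdot\dist_{\SS^n}(\cdot,q_j)\to|x-x_j|$ and the argument of the Gegenbauer polynomial converges to the right Bessel argument. One then sums over $j=1,\dots,N'$ with coefficients $c_j$ (choosing $a_N$ independent of $j$), matching $\vphi$ term by term, and the total error is at most $N'$ times the single-term error, hence $<\de$ for $N$ large.

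I expect the main obstacle to be making the Mehler--Heine asymptotics effective in the $C^r$ norm rather than just pointwise: one needs uniform control of the convergence together with that of all derivatives up to order $r$, uniformly in $x\in B$ and for all the (finitely many, but $N$-dependent in position) centers $q_j$. This requires either a uniform asymptotic expansion of $C_N^{(n-1)/2}(\cos\theta)$ valid in a full neighborhood of $\theta=0$ in the complex plane (so that Cauchy estimates upgrade $C^0$ control to $C^r$ control), or a direct differentiation of the known asymptotic formulas with explicit remainder bounds, combined with the smoothness and uniform nondegeneracy of the coordinate change $x\mapsto\dist_{\SS^n}(\Psi^{-1}(x/N),q_j)$ near the origin. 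A secondary but routine point is that the normalization constant $a_N$ must be chosen so that the leading coefficient in the Mehler--Heine expansion is exactly $1$; one reads this off from the standard formula for $C_N^{(n-1)/2}$ and its known large-$N$ behavior. Once these asymptotics are in hand, assembling the finite linear combination and invoking Proposition~\ref{P.Bessel1} to close the triangle inequality is immediate.
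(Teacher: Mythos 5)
Your construction is the same as the paper's: you place zonal ultraspherical (Gegenbauer) harmonics at the rescaled centers $p_j=\Psi^{-1}(x_j/N)$, invoke the Mehler--Heine/Darboux asymptotics to recover $|x-x_j|^{1-n/2}J_{n/2-1}(|x-x_j|)$ in the blown-up coordinates, and sum with the coefficients $c_j$ from Proposition~\ref{P.Bessel1}. (The paper phrases the zonal harmonic as a function of $p\cdot p_j$ via the addition theorem, which both certifies that $\psi$ is an eigenfunction and keeps everything manifestly smooth in $p$ — a cleaner route than working with $\dist_{\SS^n}(\cdot,q_j)$, which is not smooth at the center; since the polynomial is even in $\theta$ this is cosmetic. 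Also, your $q_j=\Psi(x_j/N)$ should read $\Psi^{-1}(x_j/N)$, as $\Psi$ maps the sphere to $\RR^n$.)

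The one place where you diverge — and where you stop short of a proof — is the upgrade from $C^0$ to $C^r$. You correctly identify this as the main obstacle and propose differentiating the Mehler--Heine expansion (or using a complex-uniform version plus Cauchy estimates). That could be made to work, but it is laborious: one must control $r$ derivatives of the remainder uniformly, and then push them through the $N$-dependent change of variables. The paper avoids all of this with an elliptic bootstrap: writing $\tpsi(x):=\psi\circ\Psi^{-1}(x/N)$, the eigenfunction equation rescales to $\Delta\tpsi+\tpsi=\tfrac1N A\tpsi$ with $A$ a second-order operator with $N$-uniformly bounded coefficients, while $\Delta\vphi+\vphi=0$; hence $\vphi-\tpsi$ solves a Helmholtz-type equation with an $O(1/N)$ source, and Schauder estimates convert the $C^0$ bound directly into the $C^{r}$ bound after absorbing the $\tfrac{C}{N}\|\vphi-\tpsi\|_{C^{r,\alpha}}$ term for large $N$. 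You should adopt this (or fully execute one of your two proposed alternatives); as written, the derivative control — the step you yourself flag as the crux — is asserted rather than proved.
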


\begin{proof}
Consider the ultraspherical polynomial of dimension
$n+1$ and degree~$N$, $C^{n}_{N}(t)$, which is defined as
\begin{equation}\label{cla}
C^{n}_N(t):=\frac{\Gamma(N+1) \Gamma(\frac{n}{2})}{\Gamma(N+\frac{n}{2})}\,P_{N}^{(\frac{n}{2}-1,\,\frac{n}{2}-1)}(t)\,,
\end{equation}
where $\Gamma(t)$ is the gamma function and $P_{N}^{(\alpha,\,\beta)}(t)$ are the Jacobi polynomials (see e.g~\cite[Chapter IV, Section 4.7]{Szego75}). We have included a normalizing factor so that $C^{n}_N(1)=1$ for all $N$.

Let $p,q$ be two points in $\SS^{n}$, considered as vectors in $\RR^{n+1}$ with $|p|=|q|=1$. The addition theorem for ultraspherical
polynomials ensures that $C^{n}_N(p\cdot q)$ (where $p\cdot q$
denotes the scalar product in~$\RR^{n+1}$ of the vectors $p$ and $q$) can be written as
\begin{equation}\label{clasp}
C^{n}_{N}(p\cdot q)=\frac{2\pi^{\frac{n+1}{2}}}{\Gamma(\frac{n+1}{2})} \frac{1}{d(N, n)}\sum_{k=1}^{d(N, n)}Y_{N k}(p)\,Y_{N k}(q)\,,
\end{equation}
with $\{Y_{N k}\}_{k=1}^{d(N, n)}$ being an arbitrary orthonormal
basis of eigenfunctions of the Laplacian on $\SS^n$ (spherical harmonics) with eigenvalue $N(N+n-1)$.

The function $\vphi$ is written as the finite sum
\[
\vphi(x)=\sum_{j=1}^{N'} \frac{c_j}{|x-x_j|^{\frac{n}{2}-1}}J_{\frac{n}{2}-1}(|x-x_j|)\,,
\]
with coefficients $c_j\in \RR^{m}$ and points $x_j \in B_R$. With these $c_j$ and $x_j$ we define, for any point $p\in\SS^n$, the function
\begin{equation*}
\psi(p):=\sum_{j=1}^{N'} \frac{c_j}{2^{\frac{n}{2}-1}\Gamma(\frac{n}{2})} \, C^{n}_N(p\cdot p_j)\,,
\end{equation*}
where $p_j:=\Psi^{-1}(\frac{x_{j}}{N})$. As long as $N>R$, $p_j$ is well defined. In view of Equation~\eqref{clasp} it is clear that $\psi$ is an $\RR^m$-valued eigenfunction of the Laplacian on $\SS^n$ with eigenvalue $N(N+n-1)$.

Our aim is to study the asymptotic properties of the eigenfunction
$\psi$. To begin with, note that if we consider points $p:=\Psi^{-1}(\frac{x}{N})$ with $N>R$ and $x\in B_R$, we have
\begin{equation}\label{cos}
p\cdot p_j=\cos\big(\text{dist}_{\SS^n}(p,p_j)\big)=\cos \bigg(\frac{|x-x_j|+O(N^{-1})}{N}\bigg)\,,
\end{equation}
as $N\to \infty$. The last equality comes from $\Psi:\BB\to B$ being a patch of normal geodesic coordinates (by $\text{dist}_{\SS^n}(p,p_j)$ we mean the
distance between $p$ and $p_j$ considered on the sphere
$\SS^n$).  From now on we set
\begin{equation}\label{defi}
\tpsi(x):=\psi\circ\Psi^{-1}\bigg(\frac{x}{N}\bigg)\,.
\end{equation}

When $N$ is large, one has
$$
\frac{\Gamma(N+1)}{\Gamma(N+\frac{n}{2})}=N^{1-\frac{n}{2}}+O(N^{-\frac{n}{2}})\,,
$$
so from Equation~\eqref{cos} we infer
\begin{align}\label{aaa}
C^{n}_N(p\cdot
p_j)=\bigg(\Gamma \Big(\frac{n}{2}\Big)N^{1-\frac{n}{2}}+O(N^{-\frac{n}{2}})\bigg)\,
P_{N}^{(\frac{n}{2}-1,\,\frac{n}{2}-1)}\bigg(
\cos\bigg(\frac{|x-x_j|+O(N^{-1})}{N}\bigg)\bigg)\,.
\end{align}
By virtue of Darboux's formula for the Jacobi polynomials~\cite[Theorem 8.1.1]{Szego75}, we have the estimate
\begin{equation*}
\frac{1}{N^{\frac{n}{2}-1}}P_{N}^{(\frac{n}{2}-1,\,\frac{n}{2}-1)}\Big(\cos\frac{t}{N}\Big)=
2^{\frac{n}{2}-1}\, \frac{J_{\frac{n}{2}-1}(t)}{t^{\frac{n}{2}-1}}+O(N^{-1})\,,
\end{equation*}
uniformly in compact sets (e.g., for $|t|\leq
2R$). Hence, in view of Equation \eqref{aaa}, the function $\tpsi$ can be written as
\begin{align*}
\tpsi(x)&=\sum_{j=1}^{N'} \frac{c_j}{2^{\frac{n}{2}-1}\Gamma(\frac{n}{2})} C^{n}_N\Big(\cos\Big(\frac{|x-x_j|+O(N^{-1})}{N}\Big)\Big)\\&=\sum_{j=1}^{N'} \frac{c_j}{|x-x_j|^{\frac{n}{2}-1}}J_{\frac{n}{2}-1}(|x-x_j|)+O(N^{-1})\,,
\end{align*}
for $N$ big enough and $x, x_j\in B_R$. From this we get the
uniform bound
\begin{equation}\label{c0}
\|\vphi-\tpsi\|_{C^0(B)}<\de'\,
\end{equation}
for any $\delta'>0$ and all $N$ large enough.

It remains to promote this bound to a $C^{r}$ estimate. For this, note that, since the eigenfunction $\psi$ has eigenvalue $N(N+n-1)$, the rescaled function~$\tpsi$ verifies on $B$ the equation
\begin{equation*}
\Delta \tpsi+\tpsi=\frac{1}{N}A\tpsi\,,
\end{equation*}
with
$$
A\tpsi:=-(n-1)\tpsi+G_1\, \partial \tpsi+G_2\, \partial^2\tpsi\,,
$$
where $\partial^k \tpsi$ is a matrix whose entries are $k$-th order derivatives of $\tpsi$, and $G_k(x, N)$ are smooth matrix-valued functions with uniformly bounded derivatives, i.e.,
\begin{equation}\label{bg2a}
\sup_{x\in B}|\partial^\alpha_x G_k(x, N)|\leq C_{\al}\,,
\end{equation}
with constants $C_{\al}$ independent of $N$.

Since $\vphi$ satisfies the Helmholtz equation $\De \vphi+\vphi=0\,$, the difference $\vphi-\tpsi$ satisfies
$$
\De(\vphi-\tpsi)+(\vphi-\tpsi)=\frac{1}{N}A\tpsi\,,
$$
and, considering the estimates~\eqref{c0} and~\eqref{bg2a}, by standard elliptic estimates we get
\begin{align*}
\|\vphi-\tpsi\|_{C^{r,\alpha}(B)}&<C\|\vphi-\tpsi\|_{C^0(B)}+\frac{C}{N}\|A\tpsi\|_{C^{r-2,\alpha}(B)}\\
&<C\delta'+\frac{C}{N}\|\vphi-\tpsi\|_{C^{r,\al}(B)}+\frac{C}{N}\|\vphi\|_{C^{r,\alpha}(B)}\,,
\end{align*}
so we conclude that, for $N$ big enough and $\de'$
small enough,
$$
\|\vphi-\tpsi\|_{C^{r}(B)}\leq C\de'+\frac{C \|\vphi\|_{C^{r,\al}}}{N}<\de\,.
$$
The proposition then follows. \end{proof}

\section{An inverse localization theorem on the torus}\label{IL_torus}

In this section we prove an inverse localization theorem for eigenfunctions of the Laplacian on $\TT^n$ for $n\geq 3$. We recall that the eigenvalues of the Laplacian on the $n$-torus are the integers of the form
$$
\la=|k|^2
$$
for some $k\in\ZZ^n$. In particular, the spectrum of the Laplacian in $\TT^n$ contains the set of the squares of integers.

As in the previous section, we fix an arbitrary point $p_0\in \TT^n$ and take a patch of normal geodesic coordinates
$\Psi:\BB\to B$ centered at $p_0$.

\begin{theorem}\label{T.torus}
Let $\phi$ be an $\RR^{m}$-valued function in $\RR^n$, satisfying
$\Delta \phi+\phi=0$. Fix a positive integer $r$ and a positive constant $\de'$. For any large enough odd integer $N$, there is an $\RR^m$-valued eigenfunction $\psi$ of the Laplacian on $\TT^n$ with eigenvalue $N^2$ such that
\begin{equation*}
\bigg\|\phi-\psi\circ \Psi^{-1}\Big(\frac\cdot N\Big)\bigg\|_{C^{r}(B)}\leq\delta'\,.
\end{equation*}
\end{theorem}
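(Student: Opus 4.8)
The plan is to mimic the two-step strategy used on the sphere, but to replace the ultraspherical-polynomial machinery by an explicit construction tailored to the arithmetic of the torus spectrum. First I would invoke Proposition~\ref{P.Bessel1} verbatim: it is a purely Euclidean statement, so it gives a constant $R>0$, points $\{x_j\}_{j=1}^{N'}\subset B_R$ and vectors $\{c_j\}_{j=1}^{N'}\subset\RR^m$ such that
\[
\vphi(x):=\sum_{j=1}^{N'}\frac{c_j}{|x-x_j|^{\frac n2-1}}J_{\frac n2-1}(|x-x_j|)
\]
satisfies $\|\phi-\vphi\|_{C^r(B)}<\de$ for a $\de$ to be fixed at the end. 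It then suffices to prove the analogue of Proposition~\ref{P.spharm2}: for every large odd $N$ there is a $\TT^n$-eigenfunction $\psi$ with eigenvalue $N^2$ whose rescaling $\tpsi(x):=\psi\circ\Psi^{-1}(x/N)$ approximates $\vphi$ on $B$ in $C^r$.

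The key point is to recall the Fourier/Herglotz representation hiding inside $\vphi$: the proof of Proposition~\ref{P.Bessel1} in fact produces a smooth $f:\SS^{n-1}\to\RR^m$ with $\vphi(x)=\Real\int_{\SS^{n-1}}f(\xi)\,e^{ix\cdot\xi}\,d\si(\xi)$, so $\vphi$ is a superposition of plane waves with frequencies on the unit sphere. On the torus, eigenfunctions of eigenvalue $N^2$ are superpositions of $e^{ik\cdot y}$ with $k\in\ZZ^n$, $|k|=N$. The idea is: take a fine net of lattice points $k/N$ lying near $\SS^{n-1}$, use them to build a Riemann-sum approximation of the Herglotz integral, and set
\[
\psi(y):=\Real\sum_{k\in\ZZ^n,\ |k|=N} a_k\,e^{ik\cdot y},\qquad
a_k\ \approx\ f(k/N)\,\bigl(\text{weight of the patch around }k/N\bigr),
\]
so that after rescaling $y=\Psi^{-1}(x/N)$ — which, since $\Psi$ is a normal coordinate patch, reads $\Psi^{-1}(x/N)=x/N+O(N^{-2})$ in suitable Euclidean coordinates on $\TT^n=(\RR/2\pi\ZZ)^n$, hence $k\cdot y=(k/N)\cdot x+O(N^{-1})$ — one recovers $\sum_k a_k e^{i(k/N)\cdot x}\approx\int_{\SS^{n-1}}f(\xi)e^{ix\cdot\xi}d\si(\xi)=\vphi(x)$ uniformly on $B$. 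Promoting the resulting $C^0$ bound to $C^r$ is then done exactly as in Proposition~\ref{P.spharm2}: $\tpsi$ satisfies $\Delta\tpsi+\tpsi=\frac1N A\tpsi$ with $A$ a second-order operator with $N$-uniformly bounded coefficients (this time the $O(1/N)$ error comes from the curvature of the coordinate change, not from Darboux's formula), $\vphi$ is exactly Helmholtz, and standard elliptic estimates applied to the difference close the loop for $N$ large, $\de$ small.

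The main obstacle is the arithmetic one, and this is exactly why $N$ is required to be \textbf{odd}. For the Riemann-sum argument to work one needs the lattice points $\{k\in\ZZ^n:|k|=N\}$ to be \emph{equidistributed} on the sphere $N\SS^{n-1}$ as $N\to\infty$ along the chosen sequence, with no large "holes" — otherwise there is a patch of $\SS^{n-1}$ near which $f$ cannot be sampled and the plane-wave content of $\vphi$ there cannot be reproduced. For $n\ge 4$ one can take $N$ to be any integer (or any integer $\to\infty$) and use classical results on the distribution of lattice points on spheres, but for $n=3$ the set $\{k\in\ZZ^3:|k|^2=N^2\}$ is erratic and can even be empty (e.g. $N^2$ of the form $4^a(8b+7)$); restricting to odd $N$ — equivalently choosing $\la=N^2$ with $N$ odd — guarantees $N^2\equiv1\pmod 8$, hence $N^2$ is a sum of three squares, and moreover the representations equidistribute on the sphere of radius $N$ (Duke, Golubeva–Fomenko). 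So the honest version of the above sketch is: (i) fix the net scale $\eta>0$; (ii) use equidistribution of $\{k:|k|=N\}$ on $N\SS^{n-1}$ to choose, for each patch of an $\eta$-net of $\SS^{n-1}$, a lattice point $k$ with $k/N$ within $O(\eta)$ of it and assign it the weight $f(k/N)\cdot(\text{patch area})$; (iii) control the two errors — net discretization of the smooth integrand $f$, and the $O(1/N)$ distortion from $k\cdot\Psi^{-1}(x/N)$ vs.\ $(k/N)\cdot x$ — both of which are uniform on $B$; (iv) run the elliptic bootstrap. The equidistribution input is the one genuinely new ingredient compared with the sphere case and is where the condition $n\ge 3$ together with $N$ odd gets used.
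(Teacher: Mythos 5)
Your proposal is correct and follows essentially the same route as the paper: approximate $\phi$ by a Herglotz superposition of plane waves with frequencies on $\SS^{n-1}$, discretize that integral as a Riemann sum sampled at points $\xi_k\in\SS^{n-1}\cap\QQ^n$ with $N\xi_k\in\ZZ^n$ (using Duke's equidistribution theorem for odd $N$, $n\geq 3$), so that $\sum_k c_k e^{iN\xi_k\cdot y}$ is a genuine $\TT^n$-eigenfunction with eigenvalue $N^2$. The only (harmless) differences are that the detour through the Bessel-function form of Proposition~\ref{P.Bessel1} is unnecessary — the Herglotz representation suffices — and that on the flat torus the normal coordinate patch is exactly the identity, so the rescaled eigenfunction satisfies the Helmholtz equation exactly and the $C^0\to C^r$ promotion needs no $\frac1N A\tpsi$ correction term.
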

\begin{proof}
Arguing as in the proof of Proposition~\ref{P.Bessel1} we can readily
show that for any $\de>0$, there exists an $\RR^{m}$-valued function $\phi_1$ on $\RR^n$ that approximates the function $\phi$ in the ball $B$ as
\begin{equation}\label{otrv}
\|\phi_1-\phi\|_{C^{0}(B)}<\de\,,
\end{equation}
and that can be represented as the Fourier transform of a distribution supported on the unit sphere of the form
\begin{equation*}
\phi_1(x)=\int_{\SS^{n-1}}f(\xi)\, e^{i\xi\cdot x}\, d\si(\xi)\,.
\end{equation*}
Again $\SS^{n-1}$ denotes the unit sphere $\{\xi\in\RR^n:|\xi|=1\}$ and
$f$ is a smooth $\CC^m$-valued function on $\SS^{n-1}$ satisfying $f(\xi)=\bar{f}(-\xi)$.

Let us now cover the sphere $\SS^{n-1}$ by finitely many closed sets
$\{U_k\}_{k=1}^{N'}$ with piecewise smooth boundaries and pairwise
disjoint interiors such that the
diameter of each set is at most $\epsilon$. We can then repeat the argument
used in the proof of Proposition~\ref{P.Bessel1} to infer that, if
$\xi_k$ is any point in $U_k$ and we set
\[
c_k:=f(\xi_k)\, |U_k|\,,
\]
the function
\[
\tpsi(x):=\sum_{k=1}^{N'} c_{k}\, e^{i\xi_{k}\cdot x}
\]
approximates the function $\phi_1$ uniformly with an error proportional to~$\epsilon$:
\begin{equation*}
\|\tpsi-\phi_1\|_{C^{0}(B)}<C\epsilon\,.
\end{equation*}
The constant $C$ depends on $\de$ but not on $\epsilon$ nor $N'$, so one can choose
the maximal diameter~$\epsilon$ small enough so that
\begin{equation}\label{mases}
\|\tpsi-\phi_1\|_{C^{0}(B)}<\delta\,.
\end{equation}
In turn, the uniform estimate
\[
\|\tpsi-\phi\|_{C^0(B)}\leq \|\tpsi-\phi_1\|_{C^0(B)}+ \|\phi-\phi_1\|_{C^0(B)}<2\de
\]
can be readily promoted to the $C^{r}$ bound
\begin{equation}\label{estder}
\|\tpsi-\phi\|_{C^{r}(B)}<C\de\,.
\end{equation}
This follows from standard elliptic estimates as both $\tpsi$ (whose
Fourier transform is supported on~$\SS^{n-1}$) and $\phi$ satisfy the
Helmholtz equation:
\[
\De \tpsi +\tpsi =0\,,\qquad \De \phi+ \phi=0\,.
\]

Furthermore, replacing $\tpsi$ by its real part if necessary, we can safely
assume that the function $\tpsi$ is $\RR^m$-valued.

Let us now observe that for any large enough odd integer~$N$ one can choose the points $\xi_k\in U_k\subset\SS^{n-1}$ so
that they have rational components (i.e., $\xi_{k}\in
\mathbb{Q}^{n}$) and the rescalings $N\xi_k$ are integer
vectors (i.e., $N \xi_k\in\ZZ^n$). This is because for $n\geq3$, rational points
$\xi\in\SS^{n-1}\cap \mathbb{Q}^n$ of height $N$ (and so with $N\xi\in\ZZ^n$) are uniformly distributed on the unit sphere as $N\to \infty$ through odd values~\cite{Du03} (in fact, the requirement for $N$ to be odd can be dropped for $n\geq 4$~\cite{Du03}).

Choosing $\xi_k$ as above, we are now ready to prove
the inverse localization theorem in the torus. Without loss of generality, we
will take the origin as the base point $p_0$, so that we can identify
the ball $\BB$ with $B$ through the canonical $2\pi$-periodic
coordinates on the torus. In particular, the
diffeomorphism~$\Psi:\BB\to B$
that appears in the statement of the theorem can be
understood to be the identity.

Since $N
\xi_k\in\ZZ^n$, it follows that the function
\begin{equation*}
\psi(x):=\sum_{k=1}^{N'}c_{k}e^{iN\xi_k\cdot x}
\end{equation*}
is $2\pi$-periodic (that is, invariant under the translation $x\to
x+2\pi\, a$ for any vector $a\in\ZZ^n$). Therefore it defines a well-defined function on the torus, which we will still denote by $\psi$.

Since the Fourier transform of $\psi$ is now supported on the sphere of
radius~$N$, $\psi$ is an eigenfunction of the Laplacian on the torus $\TT^n$ with eigenvalue $N^2$,
$$
\Delta \psi+N^{2}\psi=0\,.
$$
The theorem then follows provided that $\de$ is chosen
small enough for $C\de<\de'$.
\end{proof}

We conclude this section noticing that the statement of Theorem~\ref{T.torus} does not hold for $\TT^2$. The reason is that rational points
$\xi\in\SS^{1}\cap \mathbb{Q}^2$ with $N\xi\in\ZZ^2$ are no longer uniformly distributed on the unit circle (not even dense) as $N\to \infty$ through any sequence of odd values, counterexamples can be found in~\cite{Cille}. Nevertheless, a slightly different statement can be proved using~\cite{Cille}:
\begin{theorem}
Let $\phi$ be an $\RR^{m}$-valued function in $\RR^2$, satisfying
$\Delta \phi+\phi=0$. Fix a positive integer $r$ and a positive constant $\de'$. Then there exists a sequence of integers $\{N_l\}_{l=1}^\infty\nearrow  \infty$, and $\RR^m$-valued eigenfunctions $\psi_l$ of the Laplacian on $\TT^2$ with eigenvalues $N_l^2$ such that
\begin{equation*}
\bigg\|\phi-\psi_l\circ \Psi^{-1}\Big(\frac\cdot N_l\Big)\bigg\|_{C^{r}(B)}\leq\delta'
\end{equation*}
for $l$ large enough.
\end{theorem}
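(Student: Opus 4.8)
The plan is to mimic the proof of Theorem~\ref{T.torus} almost verbatim, replacing only the step where one produces rational points on the sphere whose rescalings by $N$ are integer vectors. As in that proof, I would first invoke the approximation scheme of Proposition~\ref{P.Bessel1} to replace $\phi$ by a function $\phi_1$ which, on the unit ball $B$, is $\de$-close to $\phi$ in $C^0$ and admits a Herglotz representation $\phi_1(x)=\int_{\SS^1}f(\xi)\,e^{i\xi\cdot x}\,d\si(\xi)$ with $f$ smooth and satisfying $f(\xi)=\bar f(-\xi)$. Then I would discretize the integral: cover $\SS^1$ by finitely many closed arcs $\{U_k\}$ of small diameter $\epsilon$, pick points $\xi_k\in U_k$, and form the exponential sum $\tpsi(x):=\sum_k c_k\,e^{i\xi_k\cdot x}$ with $c_k:=f(\xi_k)|U_k|$, which is $C\epsilon$-close to $\phi_1$ on $B$ in $C^0$; elliptic regularity (both $\tpsi$ and $\phi$ solve the Helmholtz equation) upgrades this to a $C^r$ bound, and one may pass to the real part so that $\tpsi$ is $\RR^m$-valued.

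The whole construction goes through once we can choose the sampling points $\xi_k\in U_k$ to be rational points on $\SS^1$ with a \emph{common} denominator $N$, i.e.\ $N\xi_k\in\ZZ^2$ for a single integer $N$ that can be taken as large as we wish. On $\SS^1$, rational points are parametrized by Pythagorean triples, and for a fixed height $N$ they are neither dense nor equidistributed — this is exactly the obstruction noted after Theorem~\ref{T.torus}, with the counterexamples in~\cite{Cille}. However, as established in~\cite{Cille}, there is a sequence $N_l\nearrow\infty$ along which the rational points of $\SS^1$ with denominator $N_l$ \emph{do} become equidistributed (equivalently, dense with vanishing mesh) in $\SS^1$ as $l\to\infty$. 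Fixing such a sequence, for all large $l$ the set $\{\xi\in\SS^1:N_l\xi\in\ZZ^2\}$ meets every arc $U_k$, so we may indeed pick $\xi_k=\xi_k^{(l)}\in U_k$ with $N_l\xi_k\in\ZZ^2$.

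With this choice, $\psi_l(x):=\sum_k c_k\,e^{iN_l\xi_k\cdot x}$ is $2\pi$-periodic in each variable (each $N_l\xi_k\in\ZZ^2$), hence descends to a function on $\TT^2$; since its Fourier transform is supported on the circle of radius $N_l$, it is an eigenfunction of the Laplacian on $\TT^2$ with eigenvalue $N_l^2$. Moreover $\psi_l\circ\Psi^{-1}(\cdot/N_l)$ is precisely the rescaled exponential sum $\tpsi$ built with the points $\xi_k^{(l)}$ — taking $\Psi$ to be the identity after centering at the origin — so the $C^0$ estimate $\|\tpsi-\phi_1\|_{C^0(B)}<C\epsilon$ holds with $C$ independent of $l$ (it depends only on $f$, hence on $\de$), and elliptic regularity promotes it to the $C^r$ bound. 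Combining with~\eqref{otrv} and choosing $\epsilon$, then $\de$, small enough that the accumulated constant satisfies $C\de<\de'$, we obtain $\|\phi-\psi_l\circ\Psi^{-1}(\cdot/N_l)\|_{C^r(B)}\leq\de'$ for all $l$ large enough, as claimed.

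The one genuinely new ingredient, and the main obstacle, is the number-theoretic input: unlike the case $n\geq3$ where height-$N$ rational points equidistribute on $\SS^{n-1}$ for all (odd) $N$, here equidistribution only holds along a sparse subsequence of denominators, and it is this subsequence that forces the conclusion to be phrased in terms of a sequence $\{N_l\}$ rather than all large odd $N$. Everything else is an immediate repackaging of the proof of Theorem~\ref{T.torus}, with the uniformity of all constants in $l$ being the only point that needs a brief check.
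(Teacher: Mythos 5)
The paper gives no proof for this two-dimensional variant, only the indication that it follows from Cilleruelo's result; your proof fills that gap in exactly the way intended, repeating the torus argument and replacing Duke's equidistribution (for $n\geq 3$) with the subsequence of radii $N_l$ along which lattice points on $x^2+y^2=N_l^2$ become well-spread on $\SS^1$, as provided by~\cite{Cille}. The argument is correct; the only minor imprecision is the parenthetical ``(equivalently, dense with vanishing mesh)'' --- equidistribution implies vanishing mesh but not conversely --- though vanishing mesh is in fact all you use, so nothing breaks.
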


\section{Proof of the main theorem}\label{P.main}

For the ease of notation, we shall write $\MM$ to denote
either~$\TT^n$ or~$\SS^n$. Let $\Phi'$ be a diffeomorphism of $\MM$ mapping the codimension $m$ submanifold $\Sigma$ into the ball $\BB_{1/2} \subset \MM$, and the ball $\BB_{1/2}$ into itself. In $\SS^n$, the existence of such a diffeomorphism is
trivial, while in the case of $\TT^n$ it follows from the assumption
that $\Sigma$ is contained in a contractible set.

Consider the submanifold $\Sigma'$
in $B_{1/2} \subset \RR^n$ defined as $\Phi'(\Sigma)$ in the path of normal geodesic coordinates:
$$
\Sigma':=(\Psi \circ \Phi')(\Sigma)\,.
$$
It is shown in~\cite[Theorem 1.3]{EP} if $m\geq 2$ and~\cite[Remark A.2]{EP} if $m=1$, that there is an $\RR^m$-valued monochromatic wave $\phi=(\phi_1,\cdots,\phi_m)$, satisfying $\Delta \phi+\phi=0$ in $\RR^n$, and a diffeomorphism $\Phi_1$ (close to the identity, and different from the identity only on $B_{1/2}$) such that $\Phi_1(\Sigma')\subset B_{1/2}$ is a union of connected components of the joint nodal set $\phi_{1}^{-1}(0) \cap...\cap \phi_{m}^{-1}(0)$. In addition, the construction in~\cite{EP} ensures that the regularity condition $rk(\nabla \phi_1,\cdots,\nabla \phi_m)=m$ holds at any point of $\Phi_1(\Sigma')$, so it is a structurally stable nodal set of $\phi$ by Thom's isotopy theorem~\cite{AR}.

Now, the inverse localization theorem (Theorem \ref{T.spharm1} in the case of $\SS^n$ and Theorem \ref{T.torus} for $\TT^n$) allows us to find, for any large enough odd integer $N$, an $\RR^m$-valued function $\psi=(\psi_1,\cdots,\psi_m)$ in $\MM$ satisfying $\Delta \psi=- \lambda \psi$ (with $\lambda:=N(N+n-2)$ or $\lambda:=N^2$ in the sphere or the torus, respectively) and such that $\psi \circ \Psi^{-1}(\frac{\cdot} {N})$ approximates $\phi$ in the $C^{r}(B)$ norm as much as we want.

The structural stability property ensures the existence of a second diffeomorphism $\Phi_2: \RR^{n} \rightarrow \RR^{n}$ close to the identity, and different from the identity only on $B_{1/2}$, such that $\Phi_2(\Phi_1(\Sigma'))$ is a union of connected components of the joint nodal set of the $\RR^m$-valued function $\psi \circ \Psi^{-1}(\frac{\cdot} {N})$. Therefore, the corresponding submanifold
$$\Phi(\Sigma):=\Psi^{-1}\Big(\frac{1}{N}\Phi_2(\Phi_1((\Psi \circ \Phi')(\Sigma)))\Big)$$
is a union of connected components of the nodal set of $\psi$. The map $\Phi: \MM \rightarrow \BB_{\frac{1}{2N}}$ thus defined is easily extended to a diffeomorphism of the whole manifold $\MM$. Finally, we have by the construction that $\Phi(\Sigma)$ is structurally stable, and hence Theorem~\ref{T.high} follows.

\section{Final remark: inverse localization on the sphere in multiple regions}\label{multiple}

Theorem~\ref{T.spharm1} in Section~\ref{IL_sphere} can be refined to include inverse localization at
different points of the sphere. This way, we get an eigenfunction of the Laplacian that approximates several given solutions of the Helmholtz equation in different regions. The fast decay of ultraspherical polynomials of high degree outside the domains where they behave as shifted Bessel functions is behind this multiple localization.  Notice that, in contrast, trigonometric polynomials do not exhibit this decay, hence the lack of an analog of the following result in the case of the torus. All along this section we assume that $n\geq 2$.

Let $\{p_\alpha \}_{\alpha=1}^{N'}$ be a set of points in $\SS^{n}$, with $N'$ an arbitrarily large (but fixed throughout) integer. We denote by $\Psi_{\alpha} : \BB_{\rho}(p_\alpha) \rightarrow B_\rho$ the corresponding geodesic patches on balls of radius $\rho$ centered at the points $p_{\alpha}$. We fix a radius $\rho$ such that no two balls intersect, for example by setting
\[
\rho:=\frac12\min_{\alpha \neq \beta}\dist_{\SS^n}(p_\alpha, p_\beta)\,.
\]
We further choose the points $\{p_\alpha\}_{\alpha=1}^{N'}$ so that no pair of points are antipodal in $\SS^{n}\subset \RR^{n+1}$, i.e. $p_\alpha\neq-p_\beta$ for all $\alpha$, $\beta$. The reason is that the eigenfunctions of the Laplacian on the sphere with eigenvalue $N(N+n-1)$ have parity $(-1)^{N}$:
\[
\psi(p_{\alpha})=(-1)^{N} \psi(-p_{\alpha})
\]
(they are the restriction to the sphere of homogenous harmonic polynomials of degree $N$); so that prescribing the behavior of an eigenfunction in a ball around the point $p_{\alpha}$ automatically determines its behavior in the antipodal ball.

\begin{proposition}\label{P.spharm.corr}
Let $\{\phi_\alpha\}_{\alpha=1}^{N'}$ be a set of $N'$ $\RR^{m}$-valued monochromatic waves in $\RR^n$, $1\leq m\leq n$, satisfying
$\Delta \phi_\alpha+\phi_\alpha=0$. Fix a positive integer $r$ and a positive constant $\de$.  For any large enough integer $N$, there is an $\RR^{m}$-valued eigenfunction $\psi$ of the Laplacian on $\SS^n$ with eigenvalue $N(N+n-1)$ such that
\begin{equation*}
\bigg\|\phi_\alpha-\psi\circ \Psi_{\alpha}^{-1}\Big(\frac\cdot N\Big)\bigg\|_{C^{r}(B)}<\de\,
\end{equation*}
for all $1 \leq \alpha \leq N'$.
\end{proposition}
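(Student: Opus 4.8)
The plan is to build $\psi$ as a superposition $\psi:=\sum_{\alpha=1}^{N'}\psi_\alpha$ of the eigenfunctions produced by the single-region construction, and to show that each summand $\psi_\alpha$, once rescaled around any other point $p_\beta$, is negligibly small. As a first step I would fix $\de>0$ and, for each $\alpha$, run the argument of Propositions~\ref{P.Bessel1} and~\ref{P.spharm2} with base point $p_\alpha$ to obtain finitely many vectors $c_{j,\alpha}\in\RR^m$ and points $x_{j,\alpha}\in B_R$ (with $R$ depending only on $\de$), so that the eigenfunction
\[
\psi_\alpha(p):=\sum_j\frac{c_{j,\alpha}}{2^{\frac{n}{2}-1}\Gamma(\frac{n}{2})}\,C^{n}_{N}(p\cdot p_{j,\alpha}),\qquad p_{j,\alpha}:=\Psi_\alpha^{-1}\!\Big(\frac{x_{j,\alpha}}{N}\Big),
\]
of the Laplacian on $\SS^n$ with eigenvalue $N(N+n-1)$ satisfies $\big\|\phi_\alpha-\psi_\alpha\circ\Psi_\alpha^{-1}(\tfrac{\cdot}{N})\big\|_{C^{r}(B)}<\de/2$ for all $N$ large enough. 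Setting $\psi:=\sum_\alpha\psi_\alpha$, which is again an $\RR^m$-valued eigenfunction with eigenvalue $N(N+n-1)$, its rescaling around any fixed $p_\beta$ decomposes as
\[
\psi\circ\Psi_\beta^{-1}\!\big(\tfrac{\cdot}{N}\big)-\phi_\beta=\Big(\psi_\beta\circ\Psi_\beta^{-1}\!\big(\tfrac{\cdot}{N}\big)-\phi_\beta\Big)+\sum_{\alpha\neq\beta}\psi_\alpha\circ\Psi_\beta^{-1}\!\big(\tfrac{\cdot}{N}\big).
\]

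The heart of the matter is to show that, for $\alpha\neq\beta$, the cross term $\psi_\alpha\circ\Psi_\beta^{-1}(\tfrac{\cdot}{N})$ tends to $0$ in $C^{r}(B)$ as $N\to\infty$. For this I would use the decay of ultraspherical polynomials of high degree away from $t=\pm1$: the classical asymptotics for Jacobi polynomials, valid uniformly on any interval $\theta\in[\vep_0,\pi-\vep_0]$ (see~\cite{Szego75}), combined with the normalization~\eqref{cla} and the estimate $\Gamma(N+1)/\Gamma(N+\tfrac{n}{2})=N^{1-\frac{n}{2}}+O(N^{-\frac{n}{2}})$, give
\[
|C^{n}_{N}(\cos\theta)|\leq C_{\vep_0}\,N^{-\frac{n-1}{2}}\qquad\text{for }\theta\in[\vep_0,\pi-\vep_0].
\]
Since each $p_{j,\alpha}$ lies within geodesic distance $R/N$ of $p_\alpha$, and for $N$ large every point $\Psi_\beta^{-1}(x/N)$ with $x\in B_2$ lies within geodesic distance $O(1/N)$ of $p_\beta$, the angles $\theta_j:=\dist_{\SS^n}\!\big(\Psi_\beta^{-1}(x/N),p_{j,\alpha}\big)$ all lie in $[\vep_0,\pi-\vep_0]$ for a fixed $\vep_0>0$ bounded below by $\min_{\alpha\neq\beta}\dist_{\SS^n}(p_\alpha,p_\beta)$ and by $\min_{\alpha\neq\beta}\dist_{\SS^n}(p_\alpha,-p_\beta)$, the latter being positive precisely because no two of the $p_\alpha$ are antipodal. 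This last point is essential: by parity $C^{n}_{N}(\cos\theta)=(-1)^N C^{n}_{N}(\cos(\pi-\theta))$, so the polynomial also fails to decay near $\theta=\pi$, and non-antipodality is exactly what keeps us away from that second ``Bessel zone''. Summing over the finitely many $j$ then gives the $C^0(B_2)$ bound $\big\|\psi_\alpha\circ\Psi_\beta^{-1}(\tfrac{\cdot}{N})\big\|_{C^{0}(B_2)}\leq C\,N^{-\frac{n-1}{2}}$, and, since the rescaled function satisfies the same perturbed Helmholtz equation $\Delta\tilde\psi+\tilde\psi=\tfrac1N A\tilde\psi$ as in the proof of Proposition~\ref{P.spharm2} (with coefficients adapted to the patch at $p_\beta$ but uniformly bounded in $N$), the rescaled elliptic estimates used there promote this to a bound in $C^{r}(B)$ that still tends to $0$ as $N\to\infty$.

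To conclude I would plug these two facts into the decomposition above: for each $\beta$ the first term has $C^{r}(B)$ norm $<\de/2$, and, since $N'$ is fixed, the sum over $\alpha\neq\beta$ has $C^{r}(B)$ norm $<\de/2$ once $N$ is large enough; as there are finitely many indices $\beta$, a single $N$ works simultaneously for all of them and the proposition follows. The step I expect to be the genuine obstacle is the cross-term estimate — specifically, establishing the decay of $C^{n}_{N}$ away from $t=\pm1$ and checking that the relevant geodesic distances stay uniformly inside $(0,\pi)$ (which is where the non-antipodality hypothesis enters), and then promoting the resulting pointwise decay to a $C^{r}$ bound through the rescaled elliptic equation rather than by differentiating the Jacobi asymptotics. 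The remainder is a routine adaptation of the single-region arguments of Propositions~\ref{P.Bessel1} and~\ref{P.spharm2}.
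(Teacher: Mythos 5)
Your proposal is correct and follows essentially the same route as the paper: superpose the single-region eigenfunctions $\psi_\alpha$, control the cross terms via the decay of $C^{n}_{N}(\cos\theta)$ on compact subsets of $(0,\pi)$ (which is exactly where the non-antipodality hypothesis enters), and promote the resulting $C^0$ bound to $C^{r}$ by the rescaled elliptic estimates of Proposition~\ref{P.spharm2}. Your decay rate $N^{-\frac{n-1}{2}}$ is in fact the sharp one from Szeg\H{o}'s Theorem 7.32.2; the paper quotes a slightly different $O(N^{-1})$ bound, but either suffices since all that is needed is that the cross terms vanish as $N\to\infty$.
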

\begin{proof} We use the notation introduced in the proof of Proposition~\ref{P.spharm2} without further mention. Applying Theorem~\ref{T.spharm1} to each $\phi_\alpha$ we obtain, for high enough $N$, $\RR^{m}$-valued eigenfunctions of the Laplacian $\{ \psi_\alpha \}_{\alpha=1}^{N'}$ satisfying the bound
\begin{equation*}
\bigg\|\phi_\alpha-\psi_\alpha\circ \Psi_{\alpha}^{-1}\Big(\frac\cdot N\Big)\bigg\|_{C^{r}(B)}<\de'\,.
\end{equation*}
For each $\alpha$, the $\RR^{m}$-valued eigenfunction $\psi_\alpha(p)$ is a linear combination (with coefficients in $\RR^{m}$) of ultraspherical polynomials  $C^{n}_{N}(p\cdot q_j)$, where $\{q_j\}$ is a finite set of points such that $\dist_{\SS^n}(p_\alpha, q_j)$ is proportional to $N^{-1}$, for all $j$. Recall that the ultraspherical polynomials satisfy the asymptotic formula
\[
C^{n}_N(p\cdot
q)=\frac{\Gamma(\frac{n}{2})}{N^{\frac{n}{2}-1}}\,
P_{N}^{(\frac{n}{2}-1,\,\frac{n}{2}-1)}(
\cos(\dist_{\SS^n}(p, q)))+O(N^{-\frac{n}{2}})\,,
\]
so considering the fact that the Jacobi polynomials behave as (see ~\cite[Theorem 7.32.2]{Szego75})
\[
N^{1-\frac{n}{2}}\, P_N^{(\frac{n}{2}-1,\frac{n}{2}-1)}(\cos t)=\frac{O(N^{-1})}t,
\]
uniformly for $N^{-1}<t<\pi-N^{-1}$, we can conclude that the functions $C^{n}_N(p \cdot q_j)$ are uniformly
bounded as
\[
|C^{n}_N(p\cdot q_j)|\leq \frac{C_\rho}{N}
\]
for any point $p$ satisfying
\begin{equation*}
\text{min}_j \dist_{\SS^n}(p,q_j)\geq \rho \quad \text{and}\quad \text{min}_j \dist_{\SS^n}(p,-q_j)\geq \rho \,,
\end{equation*}
and where $C_{\rho}$ is a constant depending only on $\rho$.
The same decay is thus also exhibited by the eigenfunction $\psi_\alpha$,
\[
\|\psi_{\alpha}\|_{C^0(\SS^n\backslash (\BB(p_\alpha,\rho)\cup \BB(-p_\alpha,\rho))}\leq \frac{C}{N}
\]
since it is just a normalized linear combination of ultraspherical polynomials (here the constant $C$ depends on $\rho $ and on the particular coefficients in the expansion of $\psi_\alpha$, that is, on $\phi_\alpha$ and $\delta'$).

Now, if we define the $\RR^{m}$-valued eigenfunction
\[
\psi:=\sum_{\alpha=1}^{N'} \psi_{\alpha}\,
\]
and we choose $N$ large enough, the statement of the proposition follows for $r=0$. By standard elliptic estimates, the $C^0$ bound can be easily promoted to a $C^{r}$ bound, so we are done. \end{proof}

\bibliographystyle{amsplain}

\end{document}